\documentclass[12pt]{amsart}
\RequirePackage[sc]{mathpazo}
\counterwithin{equation}{section}
\usepackage{eucal}

\usepackage[centering,scale=0.75]{geometry}

\usepackage[all]{xy}
\usepackage{amssymb}
\usepackage{mathrsfs}
\usepackage{graphicx}
\usepackage{xcolor}
\usepackage{tikz}
\usepackage[colorlinks,plainpages,backref,
    linkcolor=blue!80!green,
    citecolor=green!60!blue,
    urlcolor=red!50!black]{hyperref}

\newtheorem{theorem}{Theorem}[section]

\newtheorem{lemma}[theorem]{Lemma}

\newtheorem{conj}[theorem]{Conjecture}

\newtheorem{remark}[theorem]{Remark}

\newcommand{\Htilde}{\widetilde H}

\allowdisplaybreaks

\begin{document}

\title{A solution to Butler's positivity  conjecture}

\author{Peter L. Guo}
\address[Peter L. Guo]{Center for Combinatorics, LPMC, 
Nankai University, Tianjin 300071, P.R. China}
\email{lguo@nankai.edu.cn}

\author{Mingyang Kang}
\address[Mingyang Kang]{Center for Combinatorics, LPMC, 
Nankai University, Tianjin 300071, P.R. China}
\email{1120240006@mail.nankai.edu.cn}

\author{Rui Xiong}
\address[Rui Xiong]{Department of Mathematics and Statistics, University of Ottawa, 150 Louis-Pasteur, Ottawa, ON, K1N 6N5, Canada}
\email{rxion043@uottawa.ca}

\maketitle

% \vspace{-20}

\begin{abstract}
Let $\lambda, \mu, \nu$   be  distinct partitions
such that $\lambda, \mu\subset \nu$ and $|\nu/\lambda|=|\nu/\mu|=1$. 
We prove Butler's positivity  conjecture posed in 1994:  the expansion of the  Macdonald intersection polynomial
\[
\frac{T_\lambda\widetilde{H}_\mu(X;q,t)-T_\mu\widetilde{H}_\lambda(X;q,t)}
     {T_\lambda-T_\mu}
\]
in terms of the Schur function basis has  coefficients in $\mathbb{Z}_{\geq 0}[q,t]$. 
\end{abstract}

\section{Introduction}

Let  $\nu=(\nu_1,\nu_2,\ldots)\vdash n$ be a partition of  $n$, namely, $\nu_1\geq \nu_2\geq \cdots$ and $|\nu|=\nu_1+\nu_2+\cdots=n$. Write  $\Htilde_\nu(X;q,t)$ for the {\it modified Macdonald polynomial}. 
Originally conjectured by 
Macdonald \cite{Mac-2} and ultimately resolved  by Haiman \cite{Haiman2001},      $\Htilde_\nu(X;q,t)$ for $\nu\vdash n$  is  Schur positive: 
\[
\Htilde_\nu(X;q,t)\in
\bigoplus_{\theta\vdash n}\mathbb{Z}_{\geq 0}[q,t]s_\theta(X),
\] 
where $s_\theta(X)$ is the Schur function indexed by $\theta$. 
See Haglund,     Haiman and  Loehr \cite{HML} for a combinatorial treatment of various properties concerning  $\Htilde_\nu(X;q,t)$.

Denote 
\[
D(\nu)=\{(r,s)\in\mathbb{Z}_{\geq 0}^2:0\le s<\nu_{r+1}\},
\]
which is the  diagram array  of $\nu$ using zero-based row and column coordinates.  For example, for $\nu=(3,2)$, we have  
\[
D(\nu)=\{(0,0), (0,1), (0,2), (1,0), (1,1)\}.
\]
 The conjugate  $\nu'$ of $\nu$ is the partition for which $D(\nu')=\{(s,r)\colon (r,s)\in D(\nu)\}$. For $\nu=(3,2)$, it is clear that $\nu'=(2,2,1)$.  For two partitions $\lambda$ and $\nu$, write  $\lambda\subseteq \nu$ if  $\lambda_i\leq \nu_i$ for $i\geq 1$. For $\lambda\subseteq \nu$, denote    $|\nu/\lambda|=|\nu|-|\lambda|$. 
 
Set
\begin{equation}\label{eq:Tweight}
T_\nu:=\prod_{(r,s)\in D(\nu)}t^rq^s=t^{n(\nu)}q^{n(\nu')},
\end{equation}
where 
\[n(\nu)=\sum_{i\geq 1} (i-1)\nu_i.\]

In 1994, Butler \cite{Butler1994} posed  the following Schur positivity conjecture.

\begin{conj}[\text{\cite{Butler1994}}]\label{conj-m}
Let $\nu$ be a partition of $n+1$, and $\lambda, \mu\subset \nu$  be two distinct partitions of $n$
such that $|\nu/\lambda|=|\nu/\mu|=1$. Then the  Macdonald intersection polynomial is Schur positive:
\[
I_{\lambda,\mu}(X;q,t)
:=\frac{T_\lambda\Htilde_\mu(X;q,t)-T_\mu\Htilde_\lambda(X;q,t)}
     {T_\lambda-T_\mu}\in
\bigoplus_{\theta\vdash n}\mathbb{Z}_{\geq 0}[q,t]s_\theta(X).
\]   
\end{conj}
 
\begin{remark}
(1)  Significant   progress  toward  Conjecture \ref{conj-m}  has been made by   Kim, Lee and Oh \cite[Theorem 1.3]{KLO2026} where    a weaker version  was proved: $I_{\lambda,\mu}(X;q,t)$ is monomial positive. In addition, they  confirmed   Conjecture \ref{conj-m} in some special cases \cite[Theorem 1.5]{KLO2026}. 

(2) It should be pointed out that Conjecture \ref{conj-m} is  implied by the elusive   Science Fiction Conjecture of Bergeron and Garsia  \cite{BG1999} concerning  the intersection of Garsia--Haiman  modules. See  \cite{BG1999,KLO2025,KLO2026} for more discussions. 
\end{remark}

The goal of this note is to provide  a proof of Conjecture \ref{conj-m} by realizing $I_{\lambda,\mu}(X;q,t)$ as the  Frobenius characteristic of a   bigraded  $S_n$-module, where  $S_n$ is the symmetric group  on $\{1,2,\ldots, n\}$.   Every finite-dimensional
complex $S_n$-module can be expressed as a direct sum of irreducible $S_n$-modules $S^\theta$   indexed by partitions  $\theta\vdash n$.  The  Frobenius characteristic map $\mathrm{ch}$ sends each irreducible module $S^\theta$ to $\mathrm{ch}(S^\theta)=s_\theta(X)$.   For a bigraded $S_n$-module (supported in bidegree  $\mathbb{Z}_{\geq 0}^2$)
\[
M= 
\bigoplus_{(r,s)\in\mathbb{Z}_{\geq 0}^2}M_{r,s},
\]
its bigraded  Frobenius characteristic is
\begin{equation*} 
\mathrm{Frob}_{q,t}(M)
=
\sum_{r,s\in\mathbb{Z}_{\geq 0}}t^rq^s\operatorname{ch}(M_{r,s}).
\end{equation*}

We prove Conjecture \ref{conj-m} by establishing the following.

\begin{theorem}\label{thm:main}
Let $\lambda$, $\mu$ and $\nu$ be as defined  in Conjecture \ref{conj-m}.   Then there exists a  bigraded $S_n$-module whose 
Frobenius characteristic is $I_{\lambda,\mu}(X;q,t)$. 
\end{theorem}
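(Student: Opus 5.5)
We will realize $I_{\lambda,\mu}$ inside Haiman's geometric setting. Recall that $\Htilde_\nu(X;q,t)=\mathrm{Frob}_{q,t}(R_\nu)$. Here $R_\nu=\mathbb{C}[\mathbf{x},\mathbf{y}]/I_\nu$ is the Garsia--Haiman module, with $I_\nu$ the ideal of polynomials annihilated by all derivatives of $\Delta_\nu$. Equivalently, $R_\nu$ is the fiber of the Procesi bundle $P$ over the $(\mathbb{C}^*)^2$-fixed point $I_\nu$ of $\mathrm{Hilb}^n(\mathbb{C}^2)$.

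Since $\lambda,\mu\subset\nu$ with $|\nu/\lambda|=|\nu/\mu|=1$, the two points $I_\lambda,I_\mu\in\mathrm{Hilb}^n(\mathbb{C}^2)$ are joined by a torus-invariant curve $C\cong\mathbb{P}^1$. This curve is the closure of the set of ideals whose $n$ points are those of $\nu$ minus one moving point. Concretely, $C$ is the image of the fiber $\mathbb{P}^1$ of the nested Hilbert scheme $\mathrm{Hilb}^{n,n+1}\to\mathrm{Hilb}^{n+1}$ over $I_\nu$. It parametrizes the codimension-one subspaces $I\supset I_\nu$ of the two-dimensional socle space $(I_\lambda\cap I_\mu)/I_\nu$; the ideals $I_\lambda$ and $I_\mu$ are its endpoints.

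The torus acts on $C$ with tangent weights at the two fixed points whose ratio is governed by $T_\lambda/T_\mu$. Indeed, $T_\nu/T_\lambda$ and $T_\nu/T_\mu$ are the weights of the two socle generators, i.e. of the two removable cells of $\nu$.

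The plan is to restrict the Procesi bundle to $C$, giving a torus-equivariant, $S_n$-equivariant vector bundle $P|_C$ of rank $n!$ on $\mathbb{P}^1$. Then we compute its equivariant Euler characteristic by localization:
\[
\chi(C,P|_C\otimes\mathcal{L})=\frac{\mathrm{Frob}(P_{I_\lambda})}{1-w_\lambda^{-1}}+\frac{\mathrm{Frob}(P_{I_\mu})}{1-w_\mu^{-1}},
\]
with $w_\mu=w_\lambda^{-1}$ the tangent weights and $\mathcal{L}$ a suitable equivariant twist, e.g. $\mathcal{O}(-1)$ linearized appropriately. We choose the twist and normalization so that the right-hand side becomes exactly $\bigl(T_\lambda\Htilde_\mu-T_\mu\Htilde_\lambda\bigr)/(T_\lambda-T_\mu)$, which is a routine check with the weights above. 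The required module is then $H^0(C,P|_C\otimes\mathcal{L})$, which is naturally bigraded and $S_n$-equivariant, provided the higher cohomology vanishes. Alternatively it is $H^0$ minus $H^1$, and we need $H^1=0$.

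A more algebraic candidate to test in parallel is $M=\mathbb{C}[\mathbf{x},\mathbf{y}]/(I_\lambda+I_\mu)$, or the space of polynomials spanned by derivatives of both $\Delta_\lambda$ and $\Delta_\mu$. This is the Bergeron--Garsia intersection and is the natural target. However, proving that its character equals $I_{\lambda,\mu}$ is essentially the Science Fiction conjecture, so we will use the geometric route and avoid computing dimensions directly.

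The main obstacle is the vanishing $H^1(C,P|_C\otimes\mathcal{L})=0$. On $\mathbb{P}^1$, Grothendieck splits $P|_C$ as $\bigoplus\mathcal{O}(a_i)$, so vanishing amounts to showing all $a_i\geq 0$ after the twist. That is, $P|_C$ must be globally generated on $C$.

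We try first to obtain this from Haiman's description $P=\rho_*\mathcal{O}_{X_n}$, where $X_n$ is the isospectral Hilbert scheme, flat and finite over $\mathrm{Hilb}^n$. A surjection $\mathcal{O}^{N}\to P$ is induced by the global sections $\mathbb{C}[\mathbf{x},\mathbf{y}]\to P$, since $P$ is generated by the images of polynomials. Restricting to $C$ gives global generation, hence $a_i\ge 0$, and so $H^1(C,P|_C)=0$ with no twist.

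We then adjust: if the correct twist turns out to be $\mathcal{O}(-1)$, we need $a_i\ge 0$ to still give $H^1(C,\mathcal{O}(a_i-1))=0$, which holds. The remaining work is to verify that the localization with this twist produces precisely the quotient formula. This pins down the sign and power of the twist via the weights of the two socle elements. It should also be checked that $q,t$-degrees land in $\mathbb{Z}_{\ge0}^2$, which follows because sections come from polynomials.
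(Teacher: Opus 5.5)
There is a genuine gap, and it sits in the step you call a routine check.

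\textbf{The twist is forced to have degree $-2$, so no $H^0$ can work.} Let $u=T_\mu/T_\lambda$ be the cotangent weight of $C$ at $I_\lambda$; the tangent weight is the ratio of the socle weights $T_\nu/T_\mu$ and $T_\nu/T_\lambda$. Take an equivariant line bundle $\mathcal{L}$ of degree $k$ with fibre characters $a$ at $I_\lambda$ and $b$ at $I_\mu$. Then $a/b=u^{-k}$, and
\[
\chi(C,P|_C\otimes\mathcal{L})=\frac{a\,\Htilde_\lambda}{1-u}+\frac{b\,\Htilde_\mu}{1-u^{-1}},\qquad I_{\lambda,\mu}=\frac{\Htilde_\lambda}{1-u^{-1}}+\frac{\Htilde_\mu}{1-u}.
\]
Since $\Htilde_\lambda$ and $\Htilde_\mu$ are linearly independent over $\mathbb{Q}(q,t)$, the left side equals $c\,I_{\lambda,\mu}$ only if $a=-cu$ and $b=-cu^{-1}$. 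That forces $k=-2$, and $c$ must be \emph{minus} a monomial.

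For comparison, the untwisted bundle gives $(T_\lambda\Htilde_\lambda-T_\mu\Htilde_\mu)/(T_\lambda-T_\mu)$. Your fallback $f^*\mathcal{O}_{H_n}(-1)$ gives $(\Htilde_\lambda-\Htilde_\mu)/(T_\lambda-T_\mu)$. Neither is $I_{\lambda,\mu}$.

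The correct identity is $I_{\lambda,\mu}=-\chi(C,P|_C\otimes K_C)$. So the module has to be $H^1(C,P|_C\otimes K_C)\cong H^0(C,P|_C^\vee)^*$, and no $H^0(C,P|_C\otimes\mathcal{L})$ can work. You can see this already for $\nu=(2,1)$, $\lambda=(2)$, $\mu=(1,1)$. Here $P|_C\cong\mathcal{O}_C\oplus\mathcal{O}_C(1)$, with $S_2$ acting trivially on the first summand and by $\mathbf{sgn}$ on the second, while $I_{\lambda,\mu}=s_2$. Any $H^0$ of a twist that contains the trivial representation also contains the sign representation.

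\textbf{The vanishing you need is an upper bound, which global generation cannot give.} What is required is $H^0(C,P|_C\otimes K_C)=0$, i.e.\ every splitting degree of $P|_C$ is at most $1$. Global generation only gives $a_i\ge 0$, which kills $H^1$ of the twists by $\mathcal{O}$ and $\mathcal{O}(-1)$ --- the wrong groups. The paper obtains the upper bound from Haiman's duality $\mathcal{P}\cong\mathcal{P}^\vee\otimes\mathcal{O}_{H_n}(1)\otimes\mathbf{sgn}$, combined with $f^*\mathcal{O}_{H_n}(1)\cong\mathcal{O}_{\mathbb{P}^1}(1)$.

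Once $0\le a_i\le 1$ is known, no localization is needed:
\begin{itemize}
\item[(i)] The Harder--Narasimhan filtration gives an equivariant sequence $0\to f^*\mathcal{O}_{H_n}(1)\otimes N\to f^*\mathcal{P}\to\mathcal{O}_{\mathbb{P}^1}\otimes M\to 0$.
\item[(ii)] Restricting to the two fixed points gives $\Htilde_\lambda=T_\lambda F_N+F_M$ and $\Htilde_\mu=T_\mu F_N+F_M$, hence $I_{\lambda,\mu}=F_M$.
\item[(iii)] The degrees of $M$ are nonnegative because $M$ is a quotient of $R_\lambda$. Your ``sections come from polynomials'' argument would not apply to an $H^1$.
\end{itemize}

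What your global-generation argument does correctly establish is that $(\Htilde_\lambda-\Htilde_\mu)/(T_\lambda-T_\mu)$, the paper's $F_N$, is an honest character. That is a different quantity from $I_{\lambda,\mu}$.
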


The proof of Theorem \ref{thm:main} relies on the geometry of   the Hilbert scheme of  points in the affine plane.

\subsection*{Acknowledgements}
The first author was  supported by the National Natural Science Foundation of China (No. 12371329) and the Fundamental Research Funds for the Central Universities (No. 63263094). We used ChatGPT 5.6 during  the preparation of this work.

\section{Hilbert scheme and proof of Theorem \ref{thm:main}}
Let $H_n=\operatorname{Hilb}^n(\mathbb{C}^2)$ be the Hilbert scheme of $n$ points in $\mathbb{C}^2$. That is, $H_n$ parametrizes the ideals of $\mathbb{C}[x,y]$ of codimension $n$:
$$H_n=\{I\trianglelefteq \mathbb{C}[x,y]\colon  
\dim_{\mathbb{C}} \mathbb{C}[x,y]/I=n\}.$$
By Fogarty \cite{Fogarty}, the Hilbert scheme $H_n$ is a non-singular variety of dimension $2n$. 
There is a \emph{Hilbert--Chow map}
$$\sigma\colon  H_n\to \mathbb{C}^{2n}/S_n.$$
Explicitly, we can identify $\mathbb{C}^{2n}/S_n$ with the variety of multi-sets of $\mathbb{C}^2$ of cardinality $n$, and the Hilbert--Chow map sends $I$ to the support (with multiplicity) of $\mathbb{C}[x,y]/I$ .
The isospectral Hilbert scheme $X_n$ is defined to be the reduced fiber product in the following diagram
\begin{equation}\label{eq:diagofHilb}
\begin{matrix}
\xymatrix{
X_n\ar[r] \ar[d]_\rho & \mathbb{C}^{2n}\ar[d]\\
H_n \ar[r]^-{\sigma} & \mathbb{C}^{2n}/S_n.}
\end{matrix}
\end{equation}
Induced from the natural action on $\mathbb{C}^2$, the torus $T=\mathbb{C}^\times \times \mathbb{C}^\times$ acts on $H_n$ and $X_n$. 
Moreover $X_n$ is equipped with an $S_n$-action induced from that of $\mathbb{C}^{2n}$, and the map $\rho$ is $S_n$-equivariant if we view $H_n$ as an $S_n$-variety with trivial action.

We have two important bundles over $H_n$. 
\begin{itemize}
    \item 
The first bundle is the \emph{tautological bundle}
$\mathcal{B}$, whose fiber at an ideal $I$ is the algebra $\mathbb{C}[x,y]/I$. 
It is clear that $\mathcal{B}$ is a $T$-equivariant vector bundle of rank $n$. 

    \item 
The second bundle is the \emph{Procesi bundle} 
$\mathcal{P}=\rho_*\mathcal{O}_{X_n}$. 
By Haiman's theory on the $n!$ conjecture 
\cite[Conjecture 2.2.1]{Haiman2001}, 
$\mathcal{P}$ is a ($T\times S_n$)-equivariant vector bundle of rank $n!$; 
see also \cite[Prop 5.4.1]{HaimanSurvey}. 
\end{itemize}
Following Haiman \cite[Equation (102)]{Haiman2002}, we define a $T$-equivariant bundle 
$$\mathcal{O}_{H_n}(1) := \det \mathcal{B} \cong 
\operatorname{Hom}_{S_n}(\mathbf{sgn},\mathcal{P}),
$$
where $\mathbf{sgn}$ is the sign representation of $S_n$. 
Write $\mathcal{O}_{H_n}(k)$ for $\mathcal{O}_{H_n}(1)^{\otimes k}$. 
For a partition $\mu$, we define a monomial   ideal
$$I_\mu=\left<x^ay^b\colon (a,b)\notin D(\mu)\right>\trianglelefteq \mathbb{C}[x,y]. $$
Then the set of torus fixed points of $H_n$ is  
$H_n^T = \{I_\mu\colon \mu\vdash n\}$. 

In the remainder we let $\lambda, \mu, \nu$   be  distinct partitions
such that $\lambda, \mu\subset \nu$ and $|\nu/\lambda|=|\nu/\mu|=1$. 
We can construct a flat $\mathbb{P}^1$-family of ideals as follows. 
For $\lambda\subset \nu$, denote $D(\nu/\lambda)=D(\nu)\setminus D(\lambda)$.
Let $L$ be a one-dimensional subspace of $$\mathbb{C}^2\cong \operatorname{span}\left(x^ay^b\colon (a,b)=D(\nu/\lambda)\text{ or }D(\nu/\mu)\right).$$ 
We define $I_L=I_{\nu}+L$. 
Assume $|\lambda|=|\mu|=|\nu|-1=n$. 
This defines a $T$-equivariant morphism 
$$f:\mathbb{P}^1\longrightarrow H_n,\qquad L\longmapsto I_L.$$

\begin{lemma}\label{lem:O1isO1}
As a non-equivariant line bundle, we have  $f^*\mathcal{O}_{H_n}(1)\cong \mathcal{O}_{\mathbb{P}^1}(1)$. 
\end{lemma}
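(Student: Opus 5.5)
Since $\mathcal{O}_{H_n}(1)=\det\mathcal{B}$ and pullback commutes with determinants, we have $f^*\mathcal{O}_{H_n}(1)\cong\det(f^*\mathcal{B})$. The plan is therefore to compute the rank-$n$ bundle $f^*\mathcal{B}$ on $\mathbb{P}^1$ explicitly and read off its degree.

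By construction, the fiber of $f^*\mathcal{B}$ at $L\in\mathbb{P}^1$ is $\mathbb{C}[x,y]/I_L=\mathbb{C}[x,y]/(I_\nu+L)$. Set $V=\operatorname{span}\{x^ay^b\colon (a,b)\in D(\nu/\lambda)\cup D(\nu/\mu)\}\cong\mathbb{C}^2$. The first step is to check that $I_L=I_\nu\oplus L$ as vector spaces, i.e. that $I_\nu+L$ is already an ideal. Multiplying $L$ by $x$ or $y$ gives linear combinations of monomials $x^{a+1}y^b$ or $x^ay^{b+1}$ with $(a,b)$ a removable corner of $\nu$. Such a shifted cell lies outside $D(\nu)$, since removable corners are outer corners, so these products already lie in $I_\nu$. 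Hence $I_L/I_\nu=L$.

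It follows that $\mathbb{C}[x,y]/I_L\cong(\mathbb{C}[x,y]/I_\nu)/L$, and this holds functorially in $L$. Consequently there is a short exact sequence of vector bundles on $\mathbb{P}^1$:
\[
0\longrightarrow \mathcal{O}_{\mathbb{P}^1}(-1)\longrightarrow \mathcal{O}_{\mathbb{P}^1}\otimes \mathbb{C}[x,y]/I_\nu\longrightarrow f^*\mathcal{B}\longrightarrow 0.
\]
Here the first map is the tautological subbundle $L\subset V$ composed with the inclusion $V\hookrightarrow \mathbb{C}[x,y]/I_\nu$. This inclusion is injective because the two corner monomials are among the standard monomials of $I_\nu$. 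The middle term is trivial of rank $n+1$.

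Taking determinants gives $\det f^*\mathcal{B}\cong \mathcal{O}_{\mathbb{P}^1}(-1)^{-1}\otimes\mathcal{O}_{\mathbb{P}^1}=\mathcal{O}_{\mathbb{P}^1}(1)$, which is the claim.

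The only delicate point is justifying that the fiberwise description glues to the exact sequence above. This requires that $f$ is defined through the family of quotients, so that $f^*\mathcal{B}$ is precisely the cokernel sheaf, which is locally free with fibers $\mathbb{C}[x,y]/I_L$. Once the ideal check in the first step is done, this is routine, because the cokernel of an injective map of vector bundles that is injective on every fiber is itself a vector bundle.
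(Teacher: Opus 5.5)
Your proof is correct and is essentially the paper's argument: the same short exact sequence $0\to\mathcal{O}_{\mathbb{P}^1}(-1)\to\mathcal{O}_{\mathbb{P}^1}\otimes\mathbb{C}[x,y]/I_\nu\to f^*\mathcal{B}\to 0$ with trivial middle term, followed by taking determinants. Beyond the paper, you also check details it leaves implicit: that $I_\nu+L$ is already an ideal because the two cells are removable corners, that the tautological line injects fiberwise, and that the cokernel is locally free and equals $f^*\mathcal{B}$.
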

\begin{proof}
By construction, we have the following short exact sequence 
$$0\longrightarrow \mathcal{O}_{\mathbb{P}^1}(-1)
\longrightarrow 
\mathcal{O}_{\mathbb{P}^1}
\otimes \mathbb{C}[x,y]/I_\nu
\longrightarrow 
f^*\mathcal{B}
\longrightarrow 0. $$
Its fiber at $L$ is the short exact sequence 
$$0\longrightarrow L\longrightarrow \mathbb{C}[x,y]/I_\nu\longrightarrow 
\mathbb{C}[x,y]/I_L\longrightarrow 0.
$$
The middle sheaf is a trivial bundle. Therefore, as a non-equivariant bundle, we have
$f^*\mathcal{O}_{H_n}(1)=f^*\det\mathcal{B}=\det f^*\mathcal{B}=\mathcal{O}_{\mathbb{P}^1}(1)$. 
\end{proof}

\begin{lemma}\label{lem:Pisgg}
The sheaf $\mathcal{P}$ is generated by global sections. 
\end{lemma}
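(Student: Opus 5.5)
The plan is to deduce global generation of $\mathcal{P}$ from the affineness of the morphism $\rho$ together with the fact that $X_n$ maps finitely to an affine space. The idea is that $\mathcal{P}=\rho_*\mathcal{O}_{X_n}$ is a quotient of a trivial sheaf, namely $\mathcal{O}_{H_n}\otimes \mathbb{C}[\mathbf{x},\mathbf{y}]$, where $\mathbb{C}[\mathbf{x},\mathbf{y}]=\mathbb{C}[x_1,y_1,\ldots,x_n,y_n]$ is the coordinate ring of $\mathbb{C}^{2n}$.

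\textbf{Step 1: $\rho$ is affine.} By the defining diagram \eqref{eq:diagofHilb}, $X_n$ is the reduced subscheme of $H_n\times_{\mathbb{C}^{2n}/S_n}\mathbb{C}^{2n}$. The quotient map $\mathbb{C}^{2n}\to\mathbb{C}^{2n}/S_n$ is finite, hence affine, and affineness is preserved by base change. Passing to the reduced subscheme is a closed immersion. Hence $\rho$ is affine (indeed finite).

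\textbf{Step 2: the trivial bundle surjects onto $\mathcal{P}$.} Since $\rho$ is affine, for an affine open $U\subseteq H_n$ the preimage $\rho^{-1}(U)$ is affine, and its coordinate ring is a quotient of $\mathcal{O}(U)\otimes_{\mathbb{C}[\mathbf{x},\mathbf{y}]^{S_n}}\mathbb{C}[\mathbf{x},\mathbf{y}]$. Gluing over $U$, the sheaf of algebras $\rho_*\mathcal{O}_{X_n}$ is a quotient of $\mathcal{O}_{H_n}\otimes_{\mathbb{C}[\mathbf{x},\mathbf{y}]^{S_n}}\mathbb{C}[\mathbf{x},\mathbf{y}]$. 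This sheaf is in turn a quotient of $\mathcal{O}_{H_n}\otimes_{\mathbb{C}}\mathbb{C}[\mathbf{x},\mathbf{y}]$.

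Concretely, this gives a surjection
\[
\mathcal{O}_{H_n}\otimes_{\mathbb{C}}\mathbb{C}[\mathbf{x},\mathbf{y}]\twoheadrightarrow \mathcal{P},
\]
which sends a polynomial $g$ to the section $\rho_*(g|_{X_n})$, where $g$ is pulled back along the projection $X_n\to\mathbb{C}^{2n}$. Each such image is a global section of $\mathcal{P}$, because $\Gamma(H_n,\mathcal{P})=\Gamma(X_n,\mathcal{O}_{X_n})$.

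\textbf{Step 3: reduce to finitely many sections.} Because $\mathcal{P}$ is coherent (a vector bundle of rank $n!$ by Haiman) and $H_n$ is quasi-compact, finitely many polynomials already suffice. In fact, since $\mathbb{C}[\mathbf{x},\mathbf{y}]$ is a finite module over the invariants, it is enough to take a finite set of module generators, such as monomials of bounded degree. The $S_n$- and $T$-equivariance is compatible with this construction, although the lemma only requires non-equivariant generation.

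The only point needing care is Step 2: checking that surjectivity holds at the sheaf level, rather than just on global sections. Surjectivity is local on $H_n$. On an affine open $U$, the map $\mathcal{O}(U)\otimes\mathbb{C}[\mathbf{x},\mathbf{y}]\to\mathcal{O}(\rho^{-1}U)$ is surjective because $\rho^{-1}U$ is a closed subscheme of $U\times\mathbb{C}^{2n}$. So there is no real obstacle: the main input is that $X_n$ embeds as a closed subscheme of $H_n\times\mathbb{C}^{2n}$.
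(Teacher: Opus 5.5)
Your proposal is correct and is essentially the paper's argument: both rest on the closed embedding $X_n\hookrightarrow H_n\times\mathbb{C}^{2n}$, pushed forward along an affine morphism (you use affineness of $\rho$ and check on affine opens; the paper uses affineness of $\mathrm{pr}_1$), to obtain the surjection $\mathcal{O}_{H_n}\otimes\mathbb{C}[\mathbf{x},\mathbf{y}]\twoheadrightarrow\mathcal{P}$. Your Step 3, reducing to finitely many sections, is a valid refinement but is not needed for the lemma.
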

\begin{proof}
Since $X_n$ is constructed as a reduced fiber product, we have a closed embedding $i\colon X_n\to H_n\times \mathbb{C}^{2n}$. 
It gives a surjection
$\mathcal{O}_{H_n\times \mathbb{C}^{2n}}\twoheadrightarrow i_*\mathcal{O}_{X_n}$.
Since the first projection $\operatorname{pr}_1\colon H_n\times \mathbb{C}^{2n}\to H_n$ is affine, 
we get a surjection 
$$\mathcal{O}_{H_n}\otimes \mathbb{C}[\mathbb{C}^{2n}]
\cong 
\operatorname{pr}_{1,*}\mathcal{O}_{H_n\times \mathbb{C}^{2n}}\twoheadrightarrow \operatorname{pr}_{1,*}
i_*\mathcal{O}_{X_n}
=\rho_*\mathcal{O}_{X_n}=\mathcal{P}.
$$
Thus $\mathcal P$ is generated by global sections.
% Although $\mathbb{C}[\mathbb{C}^{2n}]=\mathbb{C}[x_1,\ldots,x_n,y_1,\ldots,y_n]$ is infinite dimensional, this does not weaken global generation. 
% Since $\mathcal{P}$ is coherent and $H_n$ is quasi-compact, it is impossible to have an infinite increasing chain in $\mathcal{P}$. 
% So there exists a finite dimensional subspace $V$ of 
% $\mathbb{C}[\mathbb{C}^{2n}]$
% % $\mathbb{C}[x_1,\ldots,x_n,y_1,\ldots,y_n]$ 
% such that the restriction 
% $$\mathcal{O}_{H_n}\otimes V\twoheadrightarrow \mathcal{P}$$
% is surjective. 
\end{proof}

\begin{theorem}
There exist $(T\times S_n)$-representations $M,N$ such that 
we have a short exact sequence of ($T\times S_n$)-bundle
\begin{equation}\label{eq:BGthmses}
0\longrightarrow 
f^*\mathcal{O}_{H_n}(1)\otimes N
\longrightarrow 
f^*\mathcal{P}\longrightarrow
\mathcal{O}_{\mathbb{P}^1}\otimes M
\longrightarrow 
0.
\end{equation}
\end{theorem}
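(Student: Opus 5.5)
We want to decompose $f^*\mathcal{P}$, a $(T\times S_n)$-equivariant vector bundle of rank $n!$ on $\mathbb{P}^1$. The plan is to split it into line bundles, pin down which degrees occur, and then collect summands of equal degree.

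\textbf{Step 1: Grothendieck splitting.} Non-equivariantly, $f^*\mathcal{P}\cong\bigoplus_i \mathcal{O}_{\mathbb{P}^1}(a_i)$. By Lemma \ref{lem:Pisgg}, $\mathcal{P}$ is globally generated, so $f^*\mathcal{P}$ is too. Hence every $a_i\ge 0$.

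\textbf{Step 2: bound the degrees by $1$.} I would use the $\mathbf{sgn}$-isotypic part together with the algebra structure. Since $\mathcal{P}=\rho_*\mathcal{O}_{X_n}$ is a sheaf of $\mathcal{O}_{H_n}$-algebras, multiplication gives $\mathcal{P}^{\mathbf{sgn}}\otimes\mathcal{P}^{\mathbf{sgn}}\to \mathcal{P}^{S_n}=\mathcal{O}_{H_n}$. This should show that $f^*\mathcal{P}^\vee\otimes f^*\mathcal{O}(1)$ is globally generated, which forces $a_i\le 1$.

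More concretely, I would use Haiman's duality: $\mathcal{P}^\vee\cong \mathcal{P}\otimes\mathcal{O}_{H_n}(-1)$ up to a torus twist. This holds because $X_n$ is Gorenstein with canonical sheaf pulled back from $\mathcal{O}(-1)$ (\cite{Haiman2001}). Pulling back along $f$ and applying Lemma \ref{lem:O1isO1}, the multiset $\{a_i\}$ is invariant under $a\mapsto 1-a$. Combined with Step 1, this gives $a_i\in\{0,1\}$. I expect this step to be the main obstacle, since the exact form of the duality in Haiman's work has to be invoked correctly.

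\textbf{Step 3: the filtration.} Let $E=f^*\mathcal{P}\otimes\mathcal{O}(-1)$. Since all $a_i\in\{0,1\}$, the evaluation map
\[
H^0(E)\otimes \mathcal{O}(1)\to f^*\mathcal{P}
\]
is injective onto the degree-one part, which is a subbundle. Set $N:=H^0(\mathbb{P}^1,f^*\mathcal{P}\otimes f^*\mathcal{O}_{H_n}(1)^{-1})$, which carries a natural $(T\times S_n)$-action. Then $f^*\mathcal{O}_{H_n}(1)\otimes N\to f^*\mathcal{P}$ is an equivariant injective map of bundles. It is a subbundle because the splitting shows it is fiberwise injective.

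The cokernel $Q$ is a vector bundle with all splitting degrees $0$, so $Q$ is trivial. Take $M:=H^0(\mathbb{P}^1,Q)$, which is $(T\times S_n)$-equivariant. Since $Q$ is trivial, $Q\cong\mathcal{O}_{\mathbb{P}^1}\otimes M$ equivariantly, via evaluation. This produces \eqref{eq:BGthmses}.

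Equivariance is automatic throughout, because every map used is canonical: evaluation maps and spaces of global sections.
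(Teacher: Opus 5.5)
Your proposal is correct and follows essentially the same route as the paper. Both arguments run: Grothendieck splitting, then global generation of $f^*\mathcal{P}$ to get $a_i\ge 0$, then Haiman's duality $\mathcal{P}\cong\mathcal{P}^\vee\otimes\mathcal{O}_{H_n}(1)\otimes\mathbf{sgn}$ together with Lemma~\ref{lem:O1isO1} to get $a_i\le 1$, and finally the canonical evaluation map $f^*\mathcal{O}_{H_n}(1)\otimes N\to f^*\mathcal{P}$, whose cokernel is trivial. The tentative algebra-structure idea at the start of your Step 2 is left unfinished and is not needed; the duality argument you then give is the one the paper uses.
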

\begin{proof}
Since $f^*\mathcal{P}$ is a vector bundle over $\mathbb{P}^1$, 
it is (non-equivariantly) isomorphic to a direct sum of line bundles by the Birkhoff--Grothendieck theorem \cite{Grothendieck1957}.  
By Lemma \ref{lem:Pisgg}, the line bundles can only be  chosen from $\mathcal{O}_{\mathbb{P}^1}(k)$ for $k\geq 0$. 
Haiman constructed a ($T\times S_n$)-equivariant isomorphism \cite[Equation (45)]{Haiman2001}\footnote{%
In loc. cit. the factor $\mathbf{sgn}$ was not included since the $S_n$-action is not considered. But an  explicit computation shows that $\mathbf{sgn}$ is needed to make the isomorphism $S_n$-equivariant; see the equation after \cite[Equation (96)]{HaimanSurvey}.}% 
:
\begin{equation}\label{eq:dualityP}
\tilde{\phi}\colon \mathcal{P}\longrightarrow 
\mathcal{P}^\vee\otimes \mathcal{O}_{H_n}(1)\otimes \mathbf{sgn}. 
\end{equation}
By Lemma \ref{lem:O1isO1}, the line bundle 
can only be chosen from $\mathcal{O}_{\mathbb{P}^1}(k)$ for $-k+1\geq 0$, i.e. $k\leq 1$. 

As a result, as a non-equivariant bundle $f^*\mathcal{P}$ is isomorphic to a direct sum of $\mathcal{O}_{\mathbb{P}^1}(k)$ for $k=0,1$. 
By the Harder--Narasimhan filtration, 
the direct sum of all line bundles for $k=1$ is canonical, in particular it is ($T\times S_n$)-equivariant. Explicitly, it is the image of the ($T\times S_n$)-equivariant morphism 
$$0\longrightarrow f^*\mathcal{O}_{H_n}(1)\otimes N
\longrightarrow 
f^*\mathcal{P},$$
where $N=H^0(\mathbb{P}^1,f^*\mathcal{P}\otimes f^*\mathcal{O}_{H_n}(-1))$. 
The cokernel $\mathcal{Q}$ is isomorphic to a trivial bundle over $\mathbb{P}^1$, so it is isomorphic to $\mathcal{O}_{\mathbb{P}^1}\otimes M$ where  $M=H^0(\mathbb{P}^1,\mathcal{Q})$. 
\end{proof}

Taking the dual of \eqref{eq:BGthmses}, we get the following short exact sequence
$$0\longrightarrow 
\mathcal{O}_{\mathbb{P}^1}\otimes M^*
\longrightarrow 
f^*\mathcal{P}^\vee \longrightarrow
f^*\mathcal{O}_{H_n}(-1)\otimes N^*
\longrightarrow 0.$$
Comparing   with \eqref{eq:BGthmses} and along with  \eqref{eq:dualityP}, we have 
$$M\cong N^*\otimes \mathbf{sgn}
=H^0(\mathbb{P}^1,f^*\mathcal{P}^\vee\otimes \mathbf{sgn})^*\otimes \mathbf{sgn}
= H^0(\mathbb{P}^1,f^*\mathcal{P}^\vee)^*.$$
In particular, $\dim M=\dim N=\frac{n!}{2}$. 

Let $D_\mu$ (resp., $R_\mu$) be the fiber of $\mathcal{O}_{H_n}(1)$ (resp., $\mathbb{P}$) at $I_\mu$. 
\begin{itemize}
    \item $D_\mu$ is a one-dimensional vector space with weight $T_\mu=t^{n(\mu)}q^{n(\mu')}$. 
    \item $R_\mu$ is isomorphic to the 
    Garsia-Haiman module. 
\end{itemize}

By Haiman's theory on the Macdonald positivity conjecture \cite[Conjecture 2.2.2, Equation (56)]{Haiman2001}, the bigraded Frobenius characteristic of $R_\mu$ is the modified  Macdonald polynomial $\widetilde{H}_\mu(X;q,t)$. 
For example, \eqref{eq:dualityP} implies the following  reciprocity identity  (see \cite[Proposition 3.5.12]{HaimanSurvey}):
$$\widetilde{H}_\mu(X;q,t) = t^{n(\mu)}q^{n(\mu')} \omega \widetilde{H}_\mu(X;q^{-1},t^{-1}),
$$
where $\omega$ is the omega  involution on symmetric functions which is defined by  sending  $s_\theta(X)$ to $s_{\theta'}(X)$. 

Now, restricting to the $T$-fixed points, we get 
\begin{gather*}
0\longrightarrow 
D_\lambda\otimes N
\longrightarrow 
R_\lambda 
\longrightarrow M\longrightarrow 0,\\[5pt]
0\longrightarrow 
D_\mu\otimes N
\longrightarrow 
R_\mu 
\longrightarrow M\longrightarrow 0.  
\end{gather*}

\begin{proof}[Proof of Theorem \ref{thm:main}]
Assume that the graded Frobenius characteristics of $M$ and $N$ are $F_M$ and $F_N$, respectively. 
Then we obtain that 
\begin{align*}
\widetilde{H}_\lambda 
 &=\operatorname{Frob}_{q,t}(R_\lambda)
= \operatorname{Frob}_{q,t}(D_\lambda\otimes N)+\operatorname{Frob}_{q,t}(M)
= T_\lambda\cdot F_N +F_M\\
\widetilde{H}_\mu
& =\operatorname{Frob}_{q,t}(R_\mu)
= \operatorname{Frob}_{q,t}(D_\mu\otimes N)+\operatorname{Frob}_{q,t}(M) 
= T_\mu\cdot F_N +F_M. 
\end{align*}
Note that $S_n$ acts on $\mathcal{O}_{H_n}(1)$ trivially. 
This leads to 
$$\frac{T_\lambda\widetilde{H}_{\mu}-T_\mu\widetilde{H}_\lambda}{T_\lambda-T_\mu} = F_M
\in \sum_{\theta\vdash n}\mathbb{Z}_{\geq 0}[q^{\pm 1},t^{\pm 1}]s_\theta.$$
Since $M$ is a quotient of $R_\lambda$, we have 
$F_M
\in \sum_{\theta\vdash n}\mathbb{Z}_{\geq 0}[q,t]s_\theta$
as desired. 
\end{proof}

\end{document}